\numberwithin{equation}{section}
\newtheorem{thm}{Theorem}[section]
\newtheorem{cor}[thm]{Corollary}
\newtheorem{lem}[thm]{Lemma}
\newtheorem{prop}[thm]{Proposition}
\newtheorem{defin}[thm]{Definition}
\newtheorem{rem}[thm]{Remark}
\newtheorem{example}[thm]{Example}
\newtheorem{ass}[thm]{Assumption}
\title{Convex topological algebras via linear vector fields and Cuntz algebras}
\author{Wolfgang Bock, Vyacheslav Futorny, Mikhail Neklyudov}
\date{}
\begin{document}
\maketitle

\begin{abstract}
Realization by linear vector fields is constructed for any Lie algebra which admits a biorthogonal system and for its any suitable representation. The embedding into  Lie algebras of linear vector fields is analogous to the classical Jordan-Schwinger map. A number of examples of such Lie algebras of linear vector fields is computed. In particular, we obtain examples of the twisted Heisenberg-Virasoro Lie algebra and the Schr\"odinger-Virasoro Lie algebras among others. 
More generally, we construct an embedding  of an arbitrary locally convex topological algebra into the Cuntz algebra.

\medskip

\noindent {\bf Keywords:} Vector field, topological algebra, Cuntz algebra, Schr\"odinger-Virasoro  algebra, Jordan–Schwinger map\medskip

\noindent {\bf 2010 Mathematics Subject Classification: 17B66, 17B68, 42C99
}
\medskip

\end{abstract}

\section{Introduction}
Interest to Lie algebras of vector fields goes back to Sophus Lie in his study of differential operators.  The importance of  Lie algebras of  vector fields in geometry comes from 
the classical result of Shanks and
Pursell \cite{ShanksPursell1954} that the   smooth structure on a manifold is determined by the Lie algebras of  smooth vector fields on it. Lie algebras of algebraic vector fields were studied extensively throughout the years, see \cite{Grabowski1978}, \cite{Siebert1996},  \cite{Jordan2000}, \cite{BilligNilsson2019}, \cite{BilligFutorny2016}, \cite{BilligFutorny2018} and references therein.
These are the Lie algebras of vector fields
which are modules over the corresponding rings of functions. 
Well known four Cartan type Lie algebras are important examples of $\mathbb Z$-graded infinite dimensional Lie algebras of finite growth. The Cartan type Lie algebra $W_n$ of  vector fields on $n$-dimensional torus can be constructed as the derivation algebra of the polynomial algebra. Other Cartan type Lie algebras are 
  subalgebras of $W_n$ which preserve certain differential forms.

In this paper we focus on a class of Lie algebras of linear vector fields. They are realized by differential operators of degree at most $1$. For any Lie algebra which admits a biorthogonal system (e.g.  any separable locally convex Hausdorff Lie algebra) we construct an embedding to 
 Lie algebras of linear vector fields, the same can be done for any
 suitable representation (Theorem \ref{thm-diff-op} and Theorem \ref{thm:LieAlgRep}). The constructed embedding into the Lie algebra of linear vector fields resembles  the classical Jordan-Schwinger map. In Definition \ref{def:DAgeral} we give an alternative map which does not require  the existence of a  biorthogonal system.
 
We provide various examples of  Lie algebras of linear vector fields which arise via such construction. In particular, a Lie algebra of linear vector fields can be associated with any 
Riemannian manifold $M$ and   a Hilbert space of square integrable (with respect to the standard volume measure on $M$) vector fields.  The case $M=\mathbb{R}$ leads to the well known class of twisted Heisenberg-Virasoro algebras.  We use the construction to obtain a class of representations by linear vector fields  for
the Schr\"odinger-Virasoro Lie algebras. 

Finally, we generalize our construction 
for arbitrary locally convex topological algebra and their  homotopes and obtain an embedding into the Cuntz algebra (Corollaries \ref{cor-cuntz_1} and \ref{cor-cuntz_2}). We also give an explicit representation by linear vector fields for any convex topological finite dimensional algebra using the representation of Cuntz algebra constructed by Dutkay \cite{Dutkay2014}.

\section*{Acknowledgments}

V.\,F.\ is supported in part by the CNPq (304467/2017-0) and by the Fapesp (2018/23690-6). M.\,N.\ is supported in part by the CAPES (Brasil)-- Finance code 001.

\section{Preliminaries}

All vector spaces are considered over the field $\mathbb{R}$ of real numbers.

\begin{ass}\label{ass:Space_0}
Assume that $V$ and $W$ are topological vector spaces in duality  with pairing $<\cdot,\cdot>_{V,W}$, $V$ is separable and there exists a biorthogonal system $\{e_k,f_k\}_{k=1}^{\infty}$, where $\{e_k\}_{k=1}^{\infty}\subset V$, $\{f_k\}_{k=1}^{\infty}\subset W$, that is $V=\overline{span\{e_k,k\in\mathbb{N}\}}$ and $<e_k,f_j>=\delta_{kj},k,j\in\mathbb{N}$.  
\end{ass}
Assumption \ref{ass:Space_0} implies that for any $x\in V$ we have the following presentation
\begin{equation}
x=\sum\limits_{i\in \mathbb{N}}<x,f_i> e_i.\label{eqn:BasisExpansion}
\end{equation}

\begin{example}\label{ex:LocConvTopAlgebra}
Let $V$ be a separable locally convex Hausdorff topological vector space, $W=V^{\prime}$ its topological dual and $<\cdot,\cdot>_{V,V^{\prime}}$ duality between $V$ and $V^{\prime}$. The  existence of a biorthogonal system in this case has been shown, for instance, in \cite{Klee1958}.
\end{example}

\begin{example}
Let $V$ is a separable topological vector space endowed  with bilinear continuous symmetric non degenerate form $Q:V\times V\to \mathbb{R}$.
Then there exists a countable set $A$ and a sequence $\{e_k\}_{k\in A}$ of normalized orthogonal elements with respect to the form $Q$, that is
\[
Q(e_i,e_j)=\delta_{ij}\sigma_i,i,j\in A=A^{+}\cup A^{-},
\]
where
\[
\sigma^i=\left\{
\begin{array}{cc}
1, & i\in A^+\\
-1,& i\in A^-.
\end{array}
\right.
\]
Indeed, it is enough to apply the Gram-Schmidt orthogonalization procedure to a countable dense set of $V$. In this case, $W=V$ and $f_k=\sigma^k e_k, k\in\mathbb{N}$.
\end{example}

 Let $\mathbb{R}[\overline{x}]$ be the space of real polynomials in infinity many variables $x_1, x_2, \ldots $.  
Denote by $\mathcal A$ the Weyl algebra with generators $x_1, x_2, \ldots $ and $\partial_1, \partial_2, \ldots $ subject the relations 
$$\partial_i x_j- x_j \partial_i=\delta_{ij},$$ 
and its completion $\widehat{\mathcal A}$ with infinite linear combinations of differential operators on $\mathbb{R}[\overline{x}]$.  We will identify 
$\partial_i$ with the differential operator $\frac{\partial}{\partial x_{i}}$ for all $i$.
Finally, let 
$\widehat{\mathcal A}_l$ be the subspace of $\widehat{\mathcal A}$ consisting of \emph{linear} differential operators, that is
operators of form
 $$\sum_{i,j}a_{ij}x_i^{t_i}\partial_j^{r_j}, $$ with $1\leq t_i+r_j\leq 2$ for all $i,j$. 

If $V$ and $W$ are vector spaces then 
denote by 
$\mathcal{L}(V,W)$ the space of linear maps from $V$ to $W$.

\section{Linear differential operators}

Now we define our key operators

\

\begin{defin}\label{def:DA}
Let $D=D_V:\mathcal{L}(V,W)\to \widehat{\mathcal A}_l $, $\partial:V\to \widehat{\mathcal A}_l $, $\bar{\partial}:W\to \widehat{\mathcal A}_l $ be mappings defined as follows:
\begin{equation}\label{DA}
D(A):= \sum\limits_{\alpha,\beta\in \mathbb{N}}<Ae_{\alpha},f_{\beta}> x_{\alpha}\frac{\partial}{\partial x_{\beta}}, A\in \mathcal{L}(V,V),
\end{equation}
\begin{equation}
\partial(h):= \sum\limits_{\alpha\in \mathbb{N}}<h,f_{\alpha}> \frac{\partial}{\partial x_{\alpha}}, h\in V.
\end{equation}
\end{defin}
\begin{equation}
\bar{\partial}(r):= \sum\limits_{\alpha\in \mathbb{N}}<e_{\alpha},r>  x_{\alpha}, r\in W.
\end{equation}

\

\begin{rem}
%In the papers and books of physicists, see for instance 
  The Jordan--Schwinger map for a matrix $X=(X_{ij})_{i,j=1}^n$  is defined as follows \cite[pp. 212--213]{BidernharnLouck1981}:
  
\begin{equation}\label{def:DX}
X\mapsto \sum\limits_{i,j=1}^n X_{ij} a_i a_j^{*},
\end{equation}

where $\{a_i, a_j^*\}_{i,j=1}^n$ are boson creation and annihilation operators, i.e. the elements of the canonical commutation relations (CCR) algebra.  Hence \ref{def:DA} can be described using Jordan--Schwinger maps. For that, 
 instead of general elements of the CCR algebra, consider their realizations by operators of multiplication and derivation to underline the linearity of operators. Furthermore, if we set $V=W=\mathbb{R}^n$ in the definition of \ref{def:DA} then  \eqref{def:DX}  in our notation is given by
\[
X\mapsto D(X^*).
\]
Consequently, as the following Lemma \ref{lem:CommRelations} shows,  \ref{def:DA} defines an anti-homomorphism, while  \eqref{def:DX} gives a homomorphism.
\end{rem}

\

We have
\begin{lem}\label{lem:CommRelations}
\begin{eqnarray}
&[D(A),D(B)] = D([B,A]),\label{eqn:CommRelations_1}\\
&[\partial(f),D(A)] = \partial(Af),\label{eqn:CommRelations_2}\\
&[\partial(f),\partial(g)] = 0\label{eqn:CommRelations_3}\\
&[D(A),\bar{\partial}(r)] = \bar{\partial}(A^*r)\label{eqn:CommRelations_4}\\
&f,g\in V, r\in W, A,B\in \mathcal{L}(V,W).\nonumber
\end{eqnarray}
\end{lem}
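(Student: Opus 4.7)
The plan is to verify each of the four identities by direct computation using only the canonical commutation relations $[\partial_i,x_j]=\delta_{ij}$ in $\widehat{\mathcal A}$ together with the biorthogonality $\langle e_k,f_j\rangle=\delta_{kj}$ and the basis expansion \eqref{eqn:BasisExpansion}. Since the operators act on $\mathbb{R}[\overline{x}]$ and any polynomial involves only finitely many variables, all the formally infinite sums collapse to finite sums on any fixed polynomial, so no analytic issue of convergence arises and the manipulations are purely algebraic.

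For \eqref{eqn:CommRelations_1}, I would first establish the basic Weyl-algebra identity
\[
[x_\alpha\partial_\beta,\, x_\gamma\partial_\delta] \;=\; \delta_{\beta\gamma}\,x_\alpha\partial_\delta \;-\; \delta_{\delta\alpha}\,x_\gamma\partial_\beta,
\]
then expand $[D(A),D(B)]$ as a quadruple sum over $\alpha,\beta,\gamma,\delta$ of coefficients $\langle Ae_\alpha,f_\beta\rangle\langle Be_\gamma,f_\delta\rangle$ times the above commutator. The two Kronecker deltas collapse one index each; the remaining sums over $\beta$ and $\alpha$ are then identified via \eqref{eqn:BasisExpansion} with the matrix coefficients of compositions: specifically, $\sum_\beta\langle Ae_\alpha,f_\beta\rangle\langle Be_\beta,f_\delta\rangle=\langle BAe_\alpha,f_\delta\rangle$ and $\sum_\alpha\langle Be_\gamma,f_\alpha\rangle\langle Ae_\alpha,f_\beta\rangle=\langle ABe_\gamma,f_\beta\rangle$. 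Reassembling yields $D(BA)-D(AB)=D([B,A])$.

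Relation \eqref{eqn:CommRelations_3} is immediate because $[\partial_\alpha,\partial_\beta]=0$. For \eqref{eqn:CommRelations_2}, I would use $[\partial_\beta,x_\alpha\partial_\gamma]=\delta_{\alpha\beta}\partial_\gamma$ and then contract the inner sum via
\[
\sum_\alpha\langle h,f_\alpha\rangle\langle Ae_\alpha,f_\gamma\rangle \;=\; \Big\langle A\sum_\alpha\langle h,f_\alpha\rangle e_\alpha,\,f_\gamma\Big\rangle \;=\; \langle Ah,f_\gamma\rangle,
\]
which gives exactly $\partial(Ah)$. Analogously, for \eqref{eqn:CommRelations_4} one uses $[x_\alpha\partial_\beta,x_\gamma]=\delta_{\beta\gamma}x_\alpha$ and the defining property of the adjoint $\langle Ae_\alpha,r\rangle=\langle e_\alpha,A^*r\rangle$ to rewrite the resulting sum as $\bar\partial(A^*r)$.

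The only genuine technical point is the legitimacy of interchanging summation orders and of contracting the biorthogonal expansions inside infinite series; this is why I would frame the argument by evaluating both sides on an arbitrary monomial in $\mathbb{R}[\overline{x}]$, where only finitely many terms survive and all manipulations reduce to finite linear algebra. Once this observation is made, no single step is hard: the proof is essentially a bookkeeping exercise combining the CCR, the biorthogonality, and \eqref{eqn:BasisExpansion}.
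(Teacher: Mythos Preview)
Your approach is essentially the same as the paper's: expand the commutator using the basic Weyl identity $[x_\alpha\partial_\beta,x_\gamma\partial_\delta]=\delta_{\beta\gamma}x_\alpha\partial_\delta-\delta_{\delta\alpha}x_\gamma\partial_\beta$, then contract the inner index via the basis expansion \eqref{eqn:BasisExpansion} to recognise the matrix coefficients of $BA$ and $AB$; the paper then says the remaining identities are proved similarly or are obvious, exactly as you outline. The one small discrepancy is in the justification of the coefficient contraction $\sum_\beta\langle Ae_\alpha,f_\beta\rangle\langle Be_\beta,f_\delta\rangle=\langle BAe_\alpha,f_\delta\rangle$: the paper appeals to \eqref{eqn:BasisExpansion} together with sequential continuity of the pairing, whereas your ``finitely many terms on a fixed polynomial'' argument handles the $\partial$-indices but not the $x$-indices (applying $D(A)$ to a monomial can produce infinitely many $x_\alpha$), so the contraction still ultimately rests on the topological expansion \eqref{eqn:BasisExpansion} that you also invoke.
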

\begin{proof}
Indeed, 
\begin{eqnarray*}
&[D(A),D(B)] = \sum\limits_{i_1,i_2,j_1,j_2\in  \mathbb{N}} <A e_{i_1}, f_{j_1}>
<B e_{i_2}, f_{j_2}>[x_{i_1}\frac{\partial}{\partial x_{j_1}},x_{i_2}\frac{\partial}{\partial x_{j_2}}]\\
&= \sum\limits_{i_1,i_2,j_1,j_2\in  \mathbb{N}} <A e_{i_1}, f_{j_1}>
<B e_{i_2}, f_{j_2}> (x_{i_1}\delta_{i_2,j_1}\frac{\partial}{\partial x_{j_2}}-x_{i_2}\delta_{i_1,j_2}\frac{\partial}{\partial x_{j_1}})\\
&= \sum\limits_{i_1,j_2\in  \mathbb{N}} (\sum\limits_{j_1\in  \mathbb{N}} <A e_{i_1}, f_{j_1}>
<B e_{j_1},f_{j_2}>)x_{i_1}\frac{\partial}{\partial x_{j_2}}\\
&-\sum\limits_{i_2,j_1\in  \mathbb{N}}(\sum\limits_{j_2\in  \mathbb{N}} <A e_{j_2}, f_{j_1}>
<B e_{i_2},f_{j_2}>)x_{i_2}\frac{\partial}{\partial x_{j_1}}\\
&=  \sum\limits_{i_1,j_2\in  \mathbb{N}} <(BA-AB)e_{i_1}, f_{j_2}>x_{i_1}\frac{\partial}{\partial x_{j_2}}= D([B,A])
\end{eqnarray*}
where we have used representation \eqref{eqn:BasisExpansion} and sequential continuity of the pairing. The second and the fourth commutation relations are proved similarly. The third one is obvious.
\end{proof}

\

\begin{rem}\label{rem:Mpointmotion}
Relation \eqref{eqn:CommRelations_2} can be rewritten in the form $ad(-D(A))\partial(f)=\partial(Af)$. Consequently, from the operator identity
\[
[A_1\ldots A_n, B]=[A_1,B]A_2\ldots A_n+\ldots+A_1A_2\ldots A_{n-1}[A_n,B],
\]
follows that
\[
ad(-D(A))\left(\partial(f_1)\ldots\partial(f_n)\right)=\sum\limits_{k=1}^n \partial(f_1)\ldots\partial(A f_k)\ldots \partial(f_n),f_1,\ldots,f_n\in V.
\]
Thus the operator $ad(-D(A))$ acts on the linear space generated by $\partial(f_1)\ldots\partial(f_n)$, $ f_1,\ldots,f_n\in V$, as an operator of $n$-times motion
$A^{\otimes^n}=\sum\limits_{k=1}^n Id\otimes\ldots\otimes A\otimes\ldots\otimes Id$.
\end{rem}

\

\begin{cor}\label{cor:InvariantSubspace}
\begin{itemize}
\item[(i)]
Let $A\in \mathcal{L}(V,V)$ and $W\subset V$ is an invariant subspace of $A$ i.e. $A\in \mathcal{L}(W,W)$. Define 
\[
X:=\{\psi\in End(\mathbb{R}[\overline{x}])| \partial(f)\psi=0,\quad \forall f\in W\}.
\]
Then $X$ is an invariant subspace of $D(A)$.
\item[(ii)] Let $X$ be an invariant subspace for $D(A)$. Then
\[
W:=\{f\in V| \partial(f)\psi=0,\forall \psi \in X\}
\]
is an invariant subspace of $A$.
\end{itemize}
\end{cor}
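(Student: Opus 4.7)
The plan is to leverage the single commutation relation \eqref{eqn:CommRelations_2} from Lemma \ref{lem:CommRelations}, rewritten as
\[
\partial(f)\, D(A) \;=\; D(A)\, \partial(f) + \partial(Af),
\]
and interpret ``invariant subspace of $D(A)$'' to mean $D(A)\circ X\subseteq X$ where $D(A)$ acts on $\mathrm{End}(\mathbb{R}[\overline{x}])$ by left composition. Both parts of the corollary will then follow by a one-line algebraic manipulation that is essentially the same identity read in two directions.

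For part (i), I would take an arbitrary $\psi\in X$ and $f\in W$ and verify that $D(A)\psi$ still lies in $X$, i.e.\ that $\partial(f)\,(D(A)\psi)=0$. Applying the rewritten commutation relation,
\[
\partial(f)\,D(A)\psi \;=\; D(A)\,\partial(f)\psi \;+\; \partial(Af)\,\psi.
\]
The first summand vanishes because $\psi\in X$ and $f\in W$. The second summand vanishes because $W$ is $A$-invariant, hence $Af\in W$, and again $\psi\in X$. This gives $D(A)\psi\in X$.

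For part (ii), I would take $f\in W$ and any $\psi\in X$ and show $\partial(Af)\psi=0$, so that $Af\in W$ by definition. Using the same identity,
\[
\partial(Af)\,\psi \;=\; \partial(f)\,D(A)\psi \;-\; D(A)\,\partial(f)\psi.
\]
The second term vanishes directly from $f\in W$ and $\psi\in X$. For the first term, the hypothesis that $X$ is invariant under $D(A)$ gives $D(A)\psi\in X$, and since $f\in W$, we obtain $\partial(f)\,(D(A)\psi)=0$. Hence $Af\in W$, as required.

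I do not expect a genuine obstacle here: the only subtle point is making sure that \eqref{eqn:CommRelations_2} remains valid when $D(A)$ and $\partial(f)$ are composed with an arbitrary $\psi\in\mathrm{End}(\mathbb{R}[\overline{x}])$, but this is automatic since the relation is an identity in the associative algebra of operators on $\mathbb{R}[\overline{x}]$ and passes under left composition with $\psi$. The two statements are visibly dual, and the proof amounts to applying the same commutator identity once in each direction.
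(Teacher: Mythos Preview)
Your proposal is correct and is exactly the argument the paper has in mind: the corollary is stated without proof and is meant as an immediate consequence of the commutation relation \eqref{eqn:CommRelations_2}, which you unpack in the natural way for each direction. There is nothing to add.
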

\begin{cor}\label{cor:Semigroup}
\[
\partial(e^{tA}f)=e^{-tD(A)}\partial(f)e^{t D(A)}, f\in V
\]
\end{cor}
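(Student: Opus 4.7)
The plan is to reduce everything to the second commutation relation in Lemma \ref{lem:CommRelations} and apply the standard adjoint-action (Hadamard) identity $e^{-X}Ye^{X}=e^{\operatorname{ad}(-X)}(Y)$. Set $X=tD(A)$ and $Y=\partial(f)$.

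First I would iterate \eqref{eqn:CommRelations_2}. Since $[\partial(f),D(A)]=\partial(Af)$, we have $\operatorname{ad}(-D(A))\,\partial(f)=[-D(A),\partial(f)]=\partial(Af)$. By an immediate induction, using that $\partial:V\to\widehat{\mathcal A}_l$ is linear and that $A$ maps $V$ to $V$,
$$
\operatorname{ad}(-D(A))^n\,\partial(f)=\partial(A^n f),\qquad n\geq 0.
$$

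Next I would establish the Hadamard formula at the level of the (formal) one-parameter family $g(t):=e^{-tD(A)}\partial(f)e^{tD(A)}$. A direct differentiation gives
\begin{equation*}
g'(t)=e^{-tD(A)}\bigl(-D(A)\partial(f)+\partial(f)D(A)\bigr)e^{tD(A)}=e^{-tD(A)}\,\partial(Af)\,e^{tD(A)},
\end{equation*}
which, iterated and evaluated at $t=0$, yields the Taylor expansion
$$
g(t)=\sum_{n=0}^{\infty}\frac{t^{n}}{n!}\operatorname{ad}(-D(A))^{n}\partial(f)=\sum_{n=0}^{\infty}\frac{t^{n}}{n!}\partial(A^{n}f).
$$
Finally, pulling $\partial$ out of the series by its linearity and sequential continuity (as used in the proof of Lemma \ref{lem:CommRelations}) gives
$$
g(t)=\partial\!\left(\sum_{n=0}^{\infty}\frac{t^{n}}{n!}A^{n}f\right)=\partial(e^{tA}f),
$$
which is the desired identity.

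The main obstacle, as with most exponentiation statements at this level of generality, is not algebraic but analytic: one must make sense of the exponentials $e^{tD(A)}\in\widehat{\mathcal A}$ and $e^{tA}\in\mathcal L(V,V)$ and justify the interchange of $\partial$ with the infinite sum. At the formal/algebraic level this is immediate, since both series are organized by powers of $t$ and match term by term via the ad-identity above; under suitable continuity hypotheses on $A$ (for example when $A$ is a continuous linear operator on a Banach space $V$) the series $\sum\frac{t^n}{n!}A^n f$ converges in $V$ and the continuity of $\partial$ delivers the analytic statement.
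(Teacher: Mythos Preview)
Your proof is correct and follows exactly the approach the paper has in mind: the paper simply says the corollary ``immediately follows from formula \eqref{eqn:CommRelations_2},'' and what you have written is the natural unfolding of that remark via the Hadamard identity $e^{-X}Ye^{X}=\sum_{n\geq 0}\tfrac{1}{n!}\operatorname{ad}(-X)^n Y$ together with the iterated relation $\operatorname{ad}(-D(A))^n\partial(f)=\partial(A^n f)$. Your cautionary note on the analytic versus formal interpretation of the exponentials is appropriate and does not affect the argument at the level of generality adopted in the paper.
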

\begin{proof}
Immediately follows from formula \eqref{eqn:CommRelations_2}.
\end{proof}

Assume now that $V=W$. Then $\mathcal{L}(V,V)$ has a natural structure of a Lie algebra. 
Let $\mathfrak g$ be a Lie subalgebra of $\mathcal{L}(V,V)$. Then the restriction of $-D$  onto $\mathfrak g$ defines a representation of $\mathfrak g$ by linear differential operators.  Hence, we have
\

\begin{thm}\label{thm-diff-op}
 Let  $\mathfrak g$ be an arbitrary Lie algebra and $\rho: \mathfrak g \rightarrow End(V)$  a faithful representation of $\mathfrak g$. Then  $-D\circ \rho$ and $D^*\circ \rho$ give  embeddings of $\mathfrak g$ into $\widehat{\mathcal A}_l$, and hence, define representations of 
$\mathfrak g$ by linear vector fields.
\end{thm}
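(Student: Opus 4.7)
The plan is to deduce both embeddings directly from Lemma \ref{lem:CommRelations} together with a short injectivity argument based on the biorthogonal system.

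First, I would verify that $-D:\mathcal{L}(V,V)\to \widehat{\mathcal A}_l$ is a Lie algebra homomorphism. Using \eqref{eqn:CommRelations_1} one has
\[
[-D(A),-D(B)]=[D(A),D(B)]=D([B,A])=-D([A,B]),
\]
so the bracket is preserved. Since $\rho:\mathfrak g\to End(V)=\mathcal L(V,V)$ is already a Lie algebra homomorphism by assumption, the composition $-D\circ\rho$ is a homomorphism into $\widehat{\mathcal A}_l$, and by construction its image lies in the subspace of linear differential operators.

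Next, to upgrade this to an embedding I would check injectivity of $D$ on $\rho(\mathfrak g)$. Suppose $D(A)=0$ for $A=\rho(x)$. By Definition \ref{def:DA} all coefficients $\langle Ae_\alpha,f_\beta\rangle$ vanish, hence the expansion \eqref{eqn:BasisExpansion} applied to each vector $Ae_\alpha\in V$ gives
\[
Ae_\alpha=\sum_\beta\langle Ae_\alpha,f_\beta\rangle\,e_\beta=0
\]
for every $\alpha$. Combined with continuity of $A$ and the density $\overline{\mathrm{span}\{e_\alpha\}}=V$ from Assumption \ref{ass:Space_0}, this forces $A=0$, and faithfulness of $\rho$ finally yields $x=0$.

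For $D^*$ I read $D^*(A):=D(A^*)$, where $A^*$ is the transpose of $A$ with respect to the pairing $\langle\cdot,\cdot\rangle_{V,W}$; this is the form that recovers the classical Jordan--Schwinger map as explained in the remark preceding Lemma \ref{lem:CommRelations}. Because transposition is an anti-involution of $\mathcal L(V,V)$ and $D$ is an anti-homomorphism, their composition $D^*$ becomes a genuine Lie algebra homomorphism, and injectivity of $D^*\circ\rho$ reduces to the previous argument applied to $A^*$. The main subtle point will be precisely the last step of the injectivity argument: passing from $Ae_\alpha=0$ for every $\alpha$ to $A=0$ requires either continuity of the endomorphisms $\rho(x)$ or an interpretation of $End(V)$ which builds such continuity into the setup. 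Once this is in place, everything else is a routine consequence of Lemma \ref{lem:CommRelations}.
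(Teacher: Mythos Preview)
Your proposal is correct and follows precisely the route the paper takes: the theorem is stated as an immediate consequence of Lemma~\ref{lem:CommRelations} (the paper writes only ``Hence, we have'' after noting that $-D$ restricted to $\mathfrak g\subset\mathcal L(V,V)$ is a representation). Your explicit injectivity argument and your reading of $D^*$ as $A\mapsto D(A^*)$ supply details the paper leaves implicit, and the continuity caveat you flag is a genuine point the paper does not address.
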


\

%\begin{rem}
%Instead of the set $End(\mathbb{R}[\overline{x}])$ of endomorphisms we could have considered CCR-algebra defining mapping $D$ and $\partial$ using creation and annihilation operators instead of multiplication and derivative correspondently.   
%\end{rem}
%
%\
%
%{\color{red} More details
%\begin{rem}
%Instead of the set $End(\mathbb{R}[\overline{x}])$ of endomorphisms we could have considered CCR-algebra defining mapping $D$ and $\partial$ using creation and annihilation operators instead of multiplication and derivative correspondently.   
%\end{rem}
%}

Let $\mathfrak g$ be a Lie algebra with the center $Cent(\mathfrak g)$, which satisfies the assumption 
\ref{ass:Space_0}. Set $V=\mathfrak g$ and 
restrict the mapping $D=D_V$ on the subspace 
\[
Z:=\{ad(v):= [v,\cdot], v\in V \}\subset \mathcal{L}(V,V)
\] 
of operators of adjoint representation.  We obtain
 
\begin{equation}\label{eqn:Embedding}
\tilde{D}:= D\circ ad: \mathfrak g\to End(\mathbb{R}[\overline{x}]), 
\tilde{D}(v)=\sum\limits_{\alpha,\beta\in A}<[v,e_{\alpha}],f_{\beta}> x_{\alpha}\frac{\partial}{\partial x_{\beta}}, v\in \mathfrak g.
\end{equation}

Consequently, Lemma \ref{lem:CommRelations} implies

\begin{thm}\label{thm:LieAlgRep}
For any Lie algebra $\mathfrak g$ satisfying the assumption \eqref{ass:Space_0} (with $V=\mathfrak g$) there exists an embedding, given by formula \eqref{eqn:Embedding}, of ${\mathfrak g}/Cent({\mathfrak g})$  into the semidirect product $\tilde{D}({\mathfrak g})\ltimes \partial({\mathfrak g})\subset End(\mathbb{R}[\overline{x}]) $ of linear differential operators.
\end{thm}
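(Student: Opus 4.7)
The plan is to verify three things in order: first, that $\tilde{D}(\mathfrak g) + \partial(\mathfrak g)$ is indeed a Lie subalgebra of $End(\mathbb{R}[\overline{x}])$ with the advertised semidirect product structure; second, that $\tilde{D}$ (up to a sign) is a Lie algebra homomorphism from $\mathfrak g$; and third, that $\ker\tilde{D}=Cent(\mathfrak g)$, so that $\tilde{D}$ descends to an injection on the quotient $\mathfrak g/Cent(\mathfrak g)$.

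The first two points reduce to direct application of Lemma \ref{lem:CommRelations}. Specializing $A=ad(v)$, $B=ad(w)$ in \eqref{eqn:CommRelations_1} and using the Jacobi identity $[ad(w),ad(v)]=ad([w,v])$ gives
$$[\tilde{D}(v),\tilde{D}(w)] = D(ad([w,v])) = -\tilde{D}([v,w]),$$
so $\tilde{D}(\mathfrak g)$ is closed under the bracket and $v\mapsto -\tilde{D}(v)$ is a Lie algebra homomorphism into $\widehat{\mathcal A}_l$. Relation \eqref{eqn:CommRelations_3} says $\partial(\mathfrak g)$ is abelian, while \eqref{eqn:CommRelations_2} with $A=ad(v)$ yields $[\partial(f),\tilde{D}(v)] = \partial([v,f]) \in \partial(\mathfrak g)$. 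Together these show that $\partial(\mathfrak g)$ is an abelian ideal of $\tilde{D}(\mathfrak g)+\partial(\mathfrak g)$ and that the bracket action of $\tilde{D}(\mathfrak g)$ on $\partial(\mathfrak g)$ precisely realises the semidirect product $\tilde{D}(\mathfrak g)\ltimes\partial(\mathfrak g)$.

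The crucial step is computing the kernel. Since the monomials $\{x_\alpha \frac{\partial}{\partial x_\beta}\}_{\alpha,\beta\in\mathbb{N}}$ are linearly independent in $\widehat{\mathcal A}_l$, the vanishing $\tilde{D}(v)=0$ forces $<[v,e_\alpha],f_\beta>=0$ for every $\alpha,\beta\in\mathbb{N}$. Applying the expansion \eqref{eqn:BasisExpansion} to the element $[v,e_\alpha]\in V=\mathfrak g$ then gives $[v,e_\alpha]=0$ for every $\alpha$. Using the density of $span\{e_\alpha\}$ in $V$ together with the (sequential) continuity of the Lie bracket, this upgrades to $[v,u]=0$ for all $u\in\mathfrak g$, i.e.\ $v\in Cent(\mathfrak g)$; the reverse inclusion is trivial. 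Hence $\tilde{D}$ factors through an injection $\mathfrak g/Cent(\mathfrak g) \hookrightarrow \tilde{D}(\mathfrak g) \subset \tilde{D}(\mathfrak g)\ltimes\partial(\mathfrak g)$, as desired.

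The main obstacle is the kernel computation: it tacitly requires the Lie bracket on $\mathfrak g$ to be sequentially continuous with respect to the topology underlying Assumption \ref{ass:Space_0}, an ingredient one needs to verify when applying Theorem \ref{thm:LieAlgRep} in concrete examples. Once this is in place the argument is routine bookkeeping with the commutation relations already established in Lemma \ref{lem:CommRelations}.
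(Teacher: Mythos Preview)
Your proof is correct and follows the same overall scaffolding as the paper's: both derive the three commutation relations from Lemma~\ref{lem:CommRelations} and then identify $\ker\tilde{D}$ with $Cent(\mathfrak g)$. The one genuine difference is in how the kernel is computed. You read off $\langle [v,e_\alpha],f_\beta\rangle=0$ from the vanishing of the coefficients of $\tilde{D}(v)$, deduce $[v,e_\alpha]=0$, and then need density of the $e_\alpha$ together with sequential continuity of the Lie bracket to pass to $[v,h]=0$ for all $h$; you correctly flag this continuity hypothesis as an extra ingredient. The paper sidesteps this entirely by using the relation $[\partial(h),\tilde{D}(v)]=\partial([v,h])$: once $\tilde{D}(v)=0$ this gives $\partial([v,h])=0$ for \emph{every} $h\in\mathfrak g$ directly, and then injectivity of $\partial$ (immediate from~\eqref{eqn:BasisExpansion}) yields $[v,h]=0$. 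So the paper's argument trades your coefficient analysis for a one-line application of the already-established commutation relation, and in doing so avoids any appeal to continuity of the bracket.
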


\begin{proof}
We have
\begin{eqnarray}
&[\tilde{D}(u),\tilde{D}(v)] = \tilde{D}([v,u]),\label{eqn:CommRelationsLie_1}\
&[\partial(h),\tilde{D}(v)] = \partial([v,h]),\label{eqn:CommRelationsLie_2}\\
&[\partial(h),\partial(g)] = 0\label{eqn:CommRelationsLie_3}\\
&h,g,u,v\in \mathfrak g.\nonumber
\end{eqnarray}
If $\tilde{D}(v)=0$ then equality \eqref{eqn:CommRelationsLie_2} implies that 
$\partial([v,h])=0, h\in\mathfrak g$ and, consequently, $[v,h]=0, h\in\mathfrak g$ i.e.  $v\in Cent(\mathfrak g)$.
\end{proof}
\begin{rem}
Example \eqref{ex:LocConvTopAlgebra} shows that condition \eqref{ass:Space_0} is satisfied for any separable locally convex Hausdorff Lie algebra.
\end{rem}
Corollary \eqref{cor:InvariantSubspace} in this case shows that for each $v\in \mathfrak g$ there is one to one correspondence between subspaces of the Lie algebra invariant under action of $ad(v)$ and subspaces of $End(\mathbb{R}[\overline{x}])$ invariant with respect to $\tilde{D}(v)$.

\

%{\color{red} Not clear what to do with this
%\section{Analytical index}
%Let $i_a(A), A\in\mathcal{L}(V,V)$ analytical index of $A$ i.e. 
%\[
%i_a(A)=dim\,ker (A)-dim\,ker (A^*).
%\]
%Then
%\[
%i_a(A)=dim\,ker\ad[D(A)]-dim\,ker\ad[D(A^*)]
%\]
%Indeed, it immediately follows from the formula \eqref{eqn:CommRelations_2}.
%Under some conditions we could expect that $D(A^*)=D^*(A)$. The question would be: Can we get that $\ad[D(A^*)]=(\ad[D(A)])^*$?
%For this we need to introduce additional scalar product on the set of operators invariant with respect to $\ad$.
%}

\section{Mapping $D$ as an extension of the algebraic adjoint  operator}

Notice that $D(A)|_{V^*}=A^*$ and, consequently, our construction is an extension of the algebraic adjoint from the class of linear continuous functionals to a more general class of functions. Then we can define $D(A)$ in the following fashion:
\begin{defin}\label{def:DAgeral}
\begin{itemize}
\item[(i)]  $D(A)|_{V^*}=A^*$.

\item[(ii)]$\forall k\in\mathbb{N},\, l_1,\ldots, l_k\in V^*,\,\phi \in C^{\infty}(\mathbb{R}^k)$ define
\[
D(A)\left[\phi(l_1,\ldots,l_k)\right]:=\sum\limits_{m=1}^k \frac{\partial\phi}{\partial x_m}(l_1,\ldots,l_k)A^*l_m
\]

\item[(iii)]  Let $\mu$ be a Radon Gaussian measure on the space $V$ and $H(\mu)\subset V$ be the Cameron-Martin space of $\mu$. For general $f\in L^2(V, d\mu)$ we define 
\[
D(A)f(x):=\lim\limits_{t\to 0}\frac{f(x+tAx)-f(x)}{t}, x\in V,
\]
whenever limit exists. Note that whenever $f$ is a cylindrical function this definition coincides with (ii). 
By the density of the set of cylindrical functionals in $L^2(V,d\mu)$  and Cauchy-Schwartz inequality we can extend $D(A)$ to the Sobolev space $W^{4,1}(\mu)$ (see, for instance, \cite[chapter $5$, p.211]{Bogachev98}),  for all $A$ such that 
\[
\int\limits_V |Ax|_{H(\mu)}^4d\mu<\infty.
\]

\end{itemize}
\end{defin}

Therefore, our construction provides an extension of any representation to a linear vector field representation. Note that Theorem \ref{thm:LieAlgRep} represents an extension of the adjoint representation, first to coadjoint, and then to a vector field representation. The advantage of the definition \ref{def:DAgeral} comparing to the definition \ref{def:DA} is that we do not  require the existence of a biorthogonal system. We  only require the existence of a non empty space of linear continuous functionals. 

%I am thinking that may be it can be used in some way, for the method of %orbits of Kirillov? In the method of Kirillov we define symplectic form %on the orbit of linear continuous functional. Now we have the way to %extend this form to be defined for the orbit of more general function.

\

\section{Analog of the Killing form}
\begin{defin}
Define a bilinear form $\varepsilon:\mathcal{L}(V,V)\times\mathcal{L}(V,V)\to End(\mathbb{R}[\overline{x}])$ as follows
\[
\varepsilon(A,B):= D(A)D(B)+D(AB), A,B\in \mathcal{L}(V,V).
\]
\end{defin}
\begin{rem}
The form above measures how far the map $D$ is from being an (anti)homomorphism of algebras.
% i.e. if we we change the sign in the definition of $D$ then natural definition would be $\varepsilon(A,B):= D(A)D(B)-D(AB)$.
\end{rem}
\begin{lem}
We have
\begin{itemize}
\item[(i)] The form $\varepsilon$ is symmetric.
%\[
%\varepsilon(A,B)=\varepsilon(B,A), A,B\in \mathcal{L}(V,V).
%\]
\item[(ii)]
\[
[D(A),\varepsilon(B,C)]=-\varepsilon([A,B],C)-\varepsilon(B,[A,C]), A,B,C\in \mathcal{L}(V,V).
\]
\end{itemize}
\end{lem}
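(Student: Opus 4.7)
The plan is to exploit the identity from Lemma \ref{lem:CommRelations} that $[D(A),D(B)]=D([B,A])$, which expresses the precise defect of $D$ from being an anti-homomorphism, and then to push everything through the Leibniz rule $[D(A),XY]=[D(A),X]Y+X[D(A),Y]$.

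For part (i), I would compute the antisymmetrization
\[
\varepsilon(A,B)-\varepsilon(B,A)=\bigl(D(A)D(B)-D(B)D(A)\bigr)+\bigl(D(AB)-D(BA)\bigr)=[D(A),D(B)]+D([A,B]).
\]
Applying \eqref{eqn:CommRelations_1} the first bracket equals $D([B,A])=-D([A,B])$, so the two terms cancel and symmetry follows.

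For part (ii), I would expand both sides and match them term by term. On the left side, the Leibniz rule together with \eqref{eqn:CommRelations_1} gives
\[
[D(A),D(B)D(C)]=D([B,A])D(C)+D(B)D([C,A])=-D([A,B])D(C)-D(B)D([A,C]),
\]
while $[D(A),D(BC)]=D([BC,A])=D(BCA-ABC)$. On the right side, grouping the $D(\cdot)D(\cdot)$ pieces of $-\varepsilon([A,B],C)-\varepsilon(B,[A,C])$ reproduces exactly the same two $D([A,B])D(C)$ and $D(B)D([A,C])$ summands, so it remains only to verify that the leftover scalar-type pieces match, i.e.
\[
-D([A,B]C)-D(B[A,C])=D([BC,A]).
\]
This is a direct algebraic identity in $\mathcal{L}(V,V)$, since $[A,B]C+B[A,C]=ABC-BCA=-[BC,A]$, and $D$ applied to this equality closes the computation.

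The whole proof is essentially bookkeeping, so I do not expect a serious obstacle; the only subtlety is being careful with the anti-homomorphism sign convention (the bracket $[B,A]$ rather than $[A,B]$ appearing on the right of \eqref{eqn:CommRelations_1}), which is the source of all minus signs and must be tracked consistently when passing between the commutator and bilinear forms. Once that sign is fixed, both assertions follow from \eqref{eqn:CommRelations_1} and the derivation property of the commutator.
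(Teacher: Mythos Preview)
Your proof is correct and follows exactly the route indicated in the paper: part (i) from \eqref{eqn:CommRelations_1} alone, and part (ii) from \eqref{eqn:CommRelations_1} together with the derivation identity $[A,BC]=[A,B]C+B[A,C]$. The paper merely names these two ingredients, while you spell out the cancellations explicitly; there is no substantive difference.
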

\begin{proof}
Statement (i) follows immediately  from  \eqref{eqn:CommRelations_1}, while .
(ii)
 follows from  \eqref{eqn:CommRelations_1}  and operator identity $[A,BC]=[A,B]C+B[A,C]$.

\end{proof}

For a Lie algebra $\mathfrak g$ define the  bilinear map 
$\tilde{\varepsilon}:\mathfrak g\times \mathfrak g\to End(\mathbb{R}[\overline{x}])$ as follows:
\[
\tilde{\varepsilon}(u,v):=\varepsilon(ad(u),ad(v)),\, u,v\in \mathfrak g.
\]

Then we have
\begin{cor}\label{cor:PropertiesTE}
\begin{itemize}
\item[(i)]
$\tilde{\varepsilon}(u,v)=\varepsilon(v,u), u,v\in \mathfrak g.$
\item[(ii)]
$
[D(ad(u)),\tilde{\varepsilon}(v,w)]=-\tilde{\varepsilon}(ad(u)v,w)-\tilde{\varepsilon}(v,ad(u)w), u,v,w\in \mathfrak g.
$
\end{itemize}
\end{cor}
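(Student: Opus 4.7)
The plan is to derive both parts of Corollary \ref{cor:PropertiesTE} as direct consequences of the corresponding parts of the preceding lemma, by unfolding the definition $\tilde{\varepsilon}(u,v):=\varepsilon(ad(u),ad(v))$ and using the fact that $ad:\mathfrak g\to \mathcal{L}(V,V)$ is a Lie algebra homomorphism, so that $[ad(u),ad(v)]=ad([u,v])=ad(ad(u)v)$.

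For part (i), I would simply note that by the symmetry of $\varepsilon$ (part (i) of the preceding lemma), one has
\[
\tilde{\varepsilon}(u,v)=\varepsilon(ad(u),ad(v))=\varepsilon(ad(v),ad(u))=\tilde{\varepsilon}(v,u),
\]
which gives the claim (reading the right-hand side $\varepsilon(v,u)$ in the statement as $\tilde\varepsilon(v,u)$, since $\varepsilon$ was defined only on $\mathcal L(V,V)\times\mathcal L(V,V)$).

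For part (ii), I would apply the identity $[D(A),\varepsilon(B,C)]=-\varepsilon([A,B],C)-\varepsilon(B,[A,C])$ from part (ii) of the lemma with $A=ad(u)$, $B=ad(v)$, $C=ad(w)$. This yields
\[
[D(ad(u)),\varepsilon(ad(v),ad(w))]=-\varepsilon([ad(u),ad(v)],ad(w))-\varepsilon(ad(v),[ad(u),ad(w)]).
\]
Using $[ad(u),ad(v)]=ad(ad(u)v)$ and $[ad(u),ad(w)]=ad(ad(u)w)$, the right-hand side becomes $-\tilde{\varepsilon}(ad(u)v,w)-\tilde{\varepsilon}(v,ad(u)w)$, and the left-hand side is $[D(ad(u)),\tilde{\varepsilon}(v,w)]$ by definition.

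There is no real obstacle here; the corollary is a formal consequence of the lemma and the homomorphism property of $ad$. The only point that requires a line of care is the translation $[ad(u),ad(v)]=ad(ad(u)v)$ coming from the Jacobi identity, which is standard and needs no elaboration.
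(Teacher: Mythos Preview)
Your proposal is correct and matches the paper's intent: the corollary is stated without proof, as an immediate specialization of the preceding lemma via the definition $\tilde\varepsilon(u,v)=\varepsilon(ad(u),ad(v))$ and the homomorphism property $[ad(u),ad(v)]=ad(ad(u)v)$. Your reading of the right-hand side in (i) as $\tilde\varepsilon(v,u)$ is also the natural one.
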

This bilinear map $\tilde{\varepsilon}$ takes values in  second order operators (infinite dimensional) or (if we use creation and annihilation operators on Fock space to define $D$) in the set of linear transformations on Fock space (bosonic or fermionic).

Let $\tau: End(\mathbb{R}[\overline{x}])\rightarrow \mathbb{R}$ be a \emph{weight} function, which is a linear function whose restriction on the image of $\tilde{\varepsilon}$ is symmetric, that is $\tau(AB)=\tau(BA)$ for all $A,B\in End(\mathbb{R}[\overline{x}])$). Then one can use 
 the bilinear map $\tilde{\varepsilon}$ and $\tau$ to  define the following analog of the Killing form on $\mathfrak g$:
\begin{defin}\label{def:BilinearformRV}
\[
B:\mathfrak g\times \mathfrak g\to\mathbb{R}, B(u,v)=(\tau \circ \tilde{\varepsilon})(u,v), u,v\in \mathfrak g.
\]
\end{defin}

Therefore we can conclude from corollary \ref{cor:PropertiesTE} that
\begin{cor}\label{cor:PropertiesTE_2}
\begin{itemize}
\item[(i)]
The form $B$ is symmetric: \[
B(u,v)=B(v,u), u,v\in \mathfrak g.
\]
\item[(ii)]
\[
B(ad(u)v,w)+B(v,ad(u)w)=0, u,v,w\in \mathfrak g.
\]
\end{itemize}
\end{cor}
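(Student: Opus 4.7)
The plan is to deduce both statements of Corollary \ref{cor:PropertiesTE_2} directly from Corollary \ref{cor:PropertiesTE} by simply applying the weight function $\tau$ to the identities and exploiting its trace-like property $\tau(AB)=\tau(BA)$ on the image of $\tilde{\varepsilon}$.

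For part (i), I would apply $\tau$ to both sides of the identity $\tilde{\varepsilon}(u,v)=\tilde{\varepsilon}(v,u)$ from Corollary \ref{cor:PropertiesTE}(i) and use $B=\tau\circ\tilde{\varepsilon}$ to obtain
\[
B(u,v)=\tau(\tilde{\varepsilon}(u,v))=\tau(\tilde{\varepsilon}(v,u))=B(v,u).
\]
This step is immediate and requires nothing beyond linearity of $\tau$.

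For part (ii), I would apply $\tau$ to the identity in Corollary \ref{cor:PropertiesTE}(ii):
\[
\tau\bigl([D(ad(u)),\tilde{\varepsilon}(v,w)]\bigr)=-\tau(\tilde{\varepsilon}(ad(u)v,w))-\tau(\tilde{\varepsilon}(v,ad(u)w)).
\]
The right-hand side is exactly $-B(ad(u)v,w)-B(v,ad(u)w)$. The key observation is that the left-hand side vanishes: writing the commutator as $D(ad(u))\tilde{\varepsilon}(v,w)-\tilde{\varepsilon}(v,w)D(ad(u))$ and using the hypothesis $\tau(AB)=\tau(BA)$ (applied with $A=D(ad(u))$, $B=\tilde{\varepsilon}(v,w)$, noting that $\tau$ is cyclic on products involving elements of $\mathrm{Im}(\tilde{\varepsilon})$), linearity of $\tau$ yields zero. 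The desired invariance identity $B(ad(u)v,w)+B(v,ad(u)w)=0$ follows.

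The only subtle point is verifying that the trace-like assumption on $\tau$ is strong enough to guarantee the commutator vanishes when one of the factors is $D(ad(u))$ rather than another element of $\mathrm{Im}(\tilde{\varepsilon})$. The paper states that $\tau$ is symmetric on the image of $\tilde{\varepsilon}$, but for the argument to work we need cyclicity on products with the specific operator $D(ad(u))$ as well; this is presumably understood as part of the weight data, since otherwise the statement could not hold. Assuming this convention, both assertions follow without further computation.
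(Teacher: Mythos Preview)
Your argument is correct and is exactly the intended one: the paper presents this corollary without separate proof, as an immediate consequence of Corollary \ref{cor:PropertiesTE} together with the tracial property of $\tau$. As for your subtle point, note that the paper's parenthetical definition of the weight function actually requires $\tau(AB)=\tau(BA)$ for \emph{all} $A,B\in End(\mathbb{R}[\overline{x}])$ (the phrase about ``restriction to the image of $\tilde{\varepsilon}$'' is loosely worded), so the commutator $[D(ad(u)),\tilde{\varepsilon}(v,w)]$ is indeed annihilated by $\tau$ without any additional hypothesis.
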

%i.e. we can realize $ad$ as antisymmetric transformation of bilinear symmetric form on our Lie algebra. 
\

If the form $B$ is non-degenerate then one  constructs a 2-cocycle and consequently a central extension of  $\mathfrak g$ in the usual way.

\

\begin{cor}\label{cor:PropertiesTE_3}
Let $u\in \mathfrak g, u\neq 0$ and $\phi_u: \mathfrak g\times \mathfrak g\to\mathbb{R}$ defined by
$\phi_u(w,z)=B(ad(u)w,z)$ for $w,z\in \mathfrak g$.
Then
\begin{itemize}
\item[(i)]
\[
\phi_u(w,z)=-\phi_u(z,w), w,z\in \mathfrak g.
\]
\item[(ii)]
\[
\phi_u(y,[w,z])+\phi_u(w,[z,y])+\phi_u(z,[y,w])=0, y,w,z\in \mathfrak g,
\]
\end{itemize}
i.e. $\phi_u$ is a 2-cocycle.
\end{cor}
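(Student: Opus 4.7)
The plan is to deduce both parts directly from the symmetry and $ad$-invariance of $B$ established in Corollary \ref{cor:PropertiesTE_2}, together with the Jacobi identity in $\mathfrak g$.

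For part (i), antisymmetry of $\phi_u$ follows by a short chain: $ad$-invariance gives
\[
\phi_u(w,z)=B(ad(u)w,z)=-B(w,ad(u)z),
\]
and the symmetry of $B$ rewrites the right-hand side as $-B(ad(u)z,w)=-\phi_u(z,w)$.

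For part (ii), unfolding the definition of $\phi_u$ reduces the claim to
\[
B([u,y],[w,z])+B([u,w],[z,y])+B([u,z],[y,w])=0.
\]
The key move is to push $u$ into the first slot of $B$ in each summand. Writing $[u,y]=-[y,u]$ and applying $ad$-invariance produces $B([u,y],[w,z])=B(u,[y,[w,z]])$, and analogously for the other two terms. Summing and using bilinearity of $B$ yields
\[
B\bigl(u,\;[y,[w,z]]+[w,[z,y]]+[z,[y,w]]\bigr),
\]
and the Jacobi identity makes the second argument vanish.

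I expect the proof to be largely mechanical; the only step requiring real care is the sign bookkeeping when combining antisymmetry of the Lie bracket with the invariance relation $B(ad(u)v,w)+B(v,ad(u)w)=0$. The delicate point is ensuring that the three rewritten terms align to give exactly the cyclic Jacobi sum $[y,[w,z]]+[w,[z,y]]+[z,[y,w]]$, with no stray signs or permutations remaining.
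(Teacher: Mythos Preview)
Your argument is correct and is exactly the standard derivation the paper has in mind: the corollary is stated without proof, immediately after the remark that a 2-cocycle is constructed ``in the usual way'' from the properties of $B$ in Corollary~\ref{cor:PropertiesTE_2}, and your use of the symmetry and $ad$-invariance of $B$ together with the Jacobi identity is precisely that usual way.
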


\

%\begin{proof}
%Immediately follows from symmetry of $B$ and antisymmetry of $ad(u)$ w.r.t. $B$.
%\end{proof}
%\section{Central extension of the form $B$}
%
%Let $\widetilde{\Lambda}=\Lambda\oplus \mathbb{C}e$, $e\in Cent(\Lambda)$--central extension of Lie algebra $\Lambda$ with Lie bracket given by
%\[
%\widetilde{[u,v]}:=[u,v]+c(u,v)e,
%\]
%where $c$ satisfies
%\begin{itemize}
%\item[(i)]
%\[
%c(u,v)=-c(v,u)
%\]
%\item[(ii)]
%\[
%c(u,[v,w])+c(v,[u,w])+c(w,[u,v])=0
%\]
%\end{itemize}
%\begin{lem}
%The bilinear symmetric form $\widetilde{B}$ corresponding to the Lie algebra $\widetilde{\Lambda}$ has the following formula
%\begin{eqnarray}
%\widetilde{B}(u,v) &= B(u,v)+Tr\left[c(u,\cdot)(\partial(e)+D(e))D(ad(v))\right]\nonumber\\
%&+
%Tr\left[c(v,\cdot)(\partial(e)+D(e))D(ad(u))\right]\nonumber\\
%&+Tr\left[c(u,\cdot)c(v,\cdot)(D(e^2)+\partial(e)^2)\right],\, u,v\in\widetilde{\Lambda}
%\end{eqnarray}
%where $B$--bilinear symmetric form for the Lie algebra $\Lambda$.
%\end{lem}
%\begin{proof}
%For each $u\in\widetilde{\Lamda}$ we can define 
%\end{proof}

\section{Examples}
In this section we consider examples of computation of vector fields $D(A)$. 

\subsection{Operators on $L^2(0,2\pi)$}
\begin{trivlist}
\item[(1)]
Let $H=\{\phi\in L^2(0,2\pi)\| \phi(0)=\phi(2\pi)=0\}$ be a Hilbert space with scalar product $(f,g)_H=\frac{1}{\pi}\int\limits_{0}^{2\pi}f(x)g(x)\,dx,f,g\in H$ and orthogonal basis $\{e_n=\sin (n\cdot)\}_{n=1}^{\infty}$;
$A: \mathcal{D}(A)\subset H\mapsto H$, $\mathcal{D}(A)=\{\phi\in W^{2,2}(0,2\pi)\cap H  | \ \phi''(0)=\phi''(2\pi)=0\}$, $Af=\lambda\frac{\partial^2}{\partial_x^2}+(1-\lambda)v\frac{\partial}{\partial_x},v\in H$.

Since $v\in H$ we have that $v=\sum\limits_{m=1}^{\infty}c_m e_m$ with
$|v|_H=\sum\limits_{m=1}^{\infty}c_m^2<\infty$. Now we can deduce that
\[
A e_m=-\lambda m^2e_m+(1-\lambda)\sum\limits_{n=1}^{\infty}c_n e_n\partial_x e_m
\]
Consequently, we have
\[
(Ae_m,e_k)_H=-\lambda m^2\delta_{mk}+(1-\lambda)\sum\limits_{n=1}^{\infty}c_n (e_n\partial_x e_m,e_k)_H.
\]
We can calculate that 
\begin{eqnarray}
(e_n\partial_x e_m,e_k)_H &= \frac{m}{\pi}\int\limits_0^{2\pi}\sin nx\cos mx\sin kx,dx=\frac{m}{2}(\delta_{n,k-m}+\delta_{n,k+m}-\delta_{n,-k-m}-\delta_{n,m-k})\nonumber\\
&=\frac{m}{2}(\delta_{n,k-m}+\delta_{n,k+m}-\delta_{n,m-k}), n,m,k\in\mathbb{N}.
\end{eqnarray}
Hence,
\[
(Ae_m,e_k)_H=-\lambda m^2\delta_{mk}+\frac{m(1-\lambda)}{2}(c_{m+k}+c_{k-m}-c_{m-k}),m,k\in\mathbb{N}
\]
where we use notation $c_k:=0,k\leq 0$.
Now we can conclude that 
\begin{eqnarray}
D(A) &= \sum\limits_{m=1}^{\infty}\left(\frac{(1-\lambda)}{2}mc_{2m}-\lambda m^2\right)y_m\partial_{y_m}\nonumber\\
&+\frac{(1-\lambda)}{2}\sum\limits_{n<m,m,n\in\mathbb{N}}c_{m+n}(my_m\partial_{y_n}+n y_n \partial_{y_m})\nonumber\\
&+c_{m-n}(n y_n \partial_{y_m}-m y_m \partial_{y_n})
\end{eqnarray}
\item[(2)]

Let $H=\{\phi\in L^2(0,2\pi)\| \phi(0)=\phi(2\pi)=0\}$ be a Hilbert space with scalar product $(f,g)_H=\frac{1}{\pi}\int\limits_{0}^{2\pi}f(x)g(x)\,dx,f,g\in H$ and orthogonal basis $\{e_n=\sin (n\cdot)\}_{n=1}^{\infty}$;
$A:\mathcal{D}(A)\subset H\mapsto H$, $A=x^2\frac{\partial}{\partial_x}$.
The flow corresponding to vector field $A$ will be $X_t(x)=\frac{x}{1-tx},  t< \frac{1}{2\pi}$.

We can calculate that 
\[
\bar{A}_{nm}:=(A e_n,e_m)_H=
\left\{
\begin{array}{ccc}
\frac{4\pi nm}{n^2-m^2} & ,& n\neq m\\
-\pi & , & n=m
\end{array}
\right.
n,m\in\mathbb{N}
\]
Then we can conclude from the corollary \ref{cor:Semigroup} that
\[
\frac{1}{\pi}\int\limits_{0}^{2\pi} e_n(X_t(x))e_m(x)\, dx= (e^{t\bar{A}})_{nm},n,m\in\mathbb{N}.
\]
Notice that the matrix $\bar{A}=-\pi I+B$, where $B$ is antisymmetric matrix. Consequently $e^{t\bar{A}}$ is defined for all $t\geq 0$ and exponentially fast convergent to $0$ as $t\rightarrow\infty$.

\end{trivlist}

\subsection{Operators on $L^2(M,TM,d\lambda)$}

Let $M$ be a Riemannian manifold, $H=L^2(M,TM,d\lambda)$ be a Hilbert space of square integrable (with respect to the standard volume measure on $M$) vector fields,
$\nabla_{\cdot}(\cdot):H\times H\to H $--covariant derivative. Define $\widetilde{D}(X):=-D(\nabla_X), X\in H$. Then
\begin{eqnarray}
&[\widetilde{D}(X),\partial(Y)] = \partial(\nabla_{X} Y),\label{eqn:rel_1}\\
&{[\widetilde{D}(X),\partial(fY)]} = [\widetilde{D}(fX),\partial(Y)]+\partial(Ydf(X)),\label{eqn:rel_2} \\
&\widetilde{D}(R(X,Y)) = [\widetilde{D}(X),\widetilde{D}(Y)]-\widetilde{D}([X,Y]),X,Y\in H,f\in C^{\infty}(M),\label{eqn:rel_3}
\end{eqnarray}
where $R$ is Riemannian curvature tensor. 
If in addition torsion tensor is zero we have that 
\begin{equation}
[\widetilde{D}(X),\partial(Y)] = [\widetilde{D}(Y),\partial(X)] +\partial([X,Y])\label{eqn:rel_4}
\end{equation}

Thus operators $\{\partial(Y),\widetilde{D}(X)\},X,Y\in H$ generate certain Lie algebra which depends upon manifold $M$. 

\

In the particular case, when our manifold $M=\mathbb{S}^1$ is a circle we can identify tangent fields $X$ with scalar functions $x:\mathbb{S}^1\to \mathbb{R}$ as follows
\[
x=x(\theta)\longleftrightarrow X=x(\theta)(-\sin\theta,\cos\theta),\theta\in\mathbb{S}^1.
\]
Consequently, $\nabla_X\longleftrightarrow x(\cdot)\frac{d}{d\theta}$, $R=T=0$ and relations \eqref{eqn:rel_1} and \eqref{eqn:rel_3} become
\begin{eqnarray}
&[\widetilde{D}(x),\partial(y)] = \partial(x \frac{dy}{d\theta}),\label{eqn:rel_1a}\\
%&{[\widetilde{D}(x),\partial(fy)]} = [\widetilde{D}(fx),\partial(y)]+\partial(yx\frac{df}{d\theta}),\label{eqn:rel_2a} \\
&[\widetilde{D}(x),\widetilde{D}(y)]=\widetilde{D}(x \frac{dy}{d\theta}-y \frac{dx}{d\theta}),\label{eqn:rel_3a} \\
&[\partial(x),\partial(y)]=0,x,y,f\in C^{1}(\mathbb{S}^1).
%&[\widetilde{D}(x),\partial(y)] = [\widetilde{D}(y),\partial(x)] +\partial(x \frac{dy}{d\theta}-y \frac{dx}{d\theta}),x,y,f\in C^{1}(\mathbb{S}^1),\label{eqn:rel_4a} 
\end{eqnarray}
Relations  \eqref{eqn:rel_2} and  \eqref{eqn:rel_4} are easily deduced from \eqref{eqn:rel_1} and \eqref{eqn:rel_3} in this case.

\subsection{Heisenberg-Virasoro Lie algebra}
 Let us put in the framework of the previous example $M=\mathbb{R}$ and consider the family $X_n=x^n\partial_x, n\in\mathbb{Z}$ of vector fields on $\mathbb{R}$. Let us denote $\widetilde{D}_n=\widetilde{D}(X_n), \partial_m=\partial(X_m),n,m\in \mathbb{Z}$. Clearly in this case $R=T=0$ and we have
\begin{eqnarray}
&[\widetilde{D}_n,\partial_{m}] = m\partial_{n+m-1},\label{eqn:rel_1b}\\
&[\widetilde{D}_n,\widetilde{D}_m]=(m-n)\widetilde{D}_{n+m-1},\label{eqn:rel_3b} \\
&[\partial_{n},\partial_{m}]=0,n,m\in \mathbb{Z}. 
\end{eqnarray}

Here

\[
\tilde{D}_n=-\sum\limits_{i,j=1}^{\infty}(x^n\frac{de_i}{dx},e_j)_{L^2(\mathbb{R})}y_i\partial_{y_j}
\]
\[
\partial_m=\sum\limits_{i=1}^{\infty}(x^m,e_j) _{L^2(\mathbb{R})}\partial_{y_j}
\]
where $\{e_i\}_{i=1}^{\infty}$ orthonormal basis for $L^2(\mathbb{R}) $ (and we identify functions and vector fields on $\mathbb{R}$)

Consider the  shift of indexes by setting $d_n=\widetilde{D}_{n+1}$. Then  
\begin{eqnarray}
&[d_n,\partial_{m}] = m\partial_{n+m},\\
&[d_n, d_m]=(m-n)d_{n+m}, n,m\in \mathbb{Z}. 
\end{eqnarray}

The Lie algebra spanned by $\{d_n, \partial_{m}, n,m\in \mathbb{Z}\}$ satisfying relations above is the centerless \emph{twisted Heisenberg-Virasoro algebra} which was extensively studied (e.g. \cite{ShenSu2007},  \cite{Billig2003}, \cite{LvZhao2006}, \cite{LiuJing2008}).
 Of course, a realization of 
twisted Heisenberg-Virasoro algebra by linear vector fields is well known. 

\

\subsection{Schr\"odinger-Virasoro  Lie algebra}
Now we consider Schr\"odinger-Virasoro  Lie algebras which play important role in statistical physics \cite{He}. 
These algebras can be realized as the semidirect product of the centerless Virasoro algebra and a certain module 
of the intermediate series.   

Let $s=0,\frac{1}{2}$ and $\rho\in\mathbb{Q}$. Denote by 
$\mathcal{L}[s,\rho]$ the complex Lie algebra with  basis $\{L_n, Y_p, n\in \mathbb{Z},p\in\mathbb{Z}+s\}$ satisfying the following relations
\begin{eqnarray}
&[L_m,L_n] = (n-m)L_{n+m}\label{rel_1}\\
&[L_m,Y_p] = (p-m\rho)Y_{m+p}\label{rel_2}\\
&[Y_p,Y_q] = 0\,\, ,m,n\in \mathbb{Z},p,q\in\mathbb{Z}+s.\label{rel_3}
\end{eqnarray}

Note that $\mathcal{L}[0,0]$ is the centerless twisted Heisenberg-Virasoro Lie algebra. 
The Schr\"odinger-Virasoro algebras and their representations were studied in many papers (e.g. \cite{RU2006}, \cite{Liu2016}, \cite{LiSu2008}).

Let $H$ be a separable infinite dimensional Hilbert space and $\mathcal{H}:=A(D,H)$ be the Banach space of analytical functions on the disk $D=\{z\in\mathbb{C}||z|\leq 1 \}$ endowed with the  uniform topology. Define the
system of operators $A_m:\mathcal{H}\mapsto \mathcal{H}$ as follows

\[
A_m f:=e^{-imz}(-m\rho f+i\frac{df}{dz}),z\in D
\]
Then easy calculation shows that
\[
[A_m,A_n]=(n-m)A_{n+m}\,\, ,m,n\in \mathbb{Z},
\]
that is the system $\{A_m,m\in\mathbb{Z}\}$ gives a representation of  the first Witt algebra (the centerless Virasoro algebra) given by relations \eqref{rel_1}.  Let $g\in H$ and $\{e_p:= g e^{-ipz}\}_{p\in\mathbb{Z}+s}\subset \mathcal{H}$. Then we can define $\partial$ and $\bar{D}:=-D$ as before. Then $\bar{D}$ is a homomorphism and we have
\[
[\bar{D}(A_m),\bar{D}(A_n)]=(n-m)\bar{D}(A_{n+m})\,\, ,m,n\in \mathbb{Z}.
\]
Furthermore, clearly we have
\[
[\partial(e_p), \partial(e_q)] = 0\, ,p,q\in\mathbb{Z}+s.
\]
Moreover, 
\[
[\bar{D}(A_m),\partial(e_p)] = \partial(A_m e_p)=(p-m\rho) \partial(e_{m+p}),p\in  \mathbb{Z}+s,m\in \mathbb{Z}.
\]
Thus the system $\{\partial(e_p),\bar{D}(A_m),p\in  \mathbb{Z}+s,m\in \mathbb{Z}\}$ defines a representation of the Lie algebra $\mathcal{L}[s,\rho]$ by linear vector fields.

%{\color{red} Can we say anything about the irreducibility? Of course, the whole space of analytic functions is too big. But perhaps some natural subspace will be irreducible?}

\

\subsection{ Dynamical systems} Let $X$ be a countable set, for instance integer lattice $\mathbb{Z}^d$, $h:X\to X$, and $\{x_n\}_{n=0}^{\infty}$ a sequence defined by
\[ 
\begin{array}{ccc}
x_{n+1} &=& h(x_{n}), n\geq 0\nonumber\\
x_0 &=& x\in X.
\end{array}
\]
For any $\phi:X\to \mathbb{R}$ define
\[
A\phi(x):=\phi(h(x)),S_n\phi(x):=\phi(x_n),\, x\in X,n\geq 0.
\]
Hence, $S_n=A^n,n\geq 0$.
We set $V=W=\mathbb{R}^{X}$. Using the standard orthonormal basis $\{e_k\}_{k\in X}$ in $V$  we obtain
\[
D(A)=\sum\limits_{l\in X} x_l\frac{\partial}{\partial_{x_{h_l}}},
\]
\[
\partial(f)=\sum\limits_{l\in X} f(l)\frac{\partial}{\partial_{x_l}}.
\]
Consequently, we can conclude that
\[
\partial(S_n\phi)=(-ad(D(A)))^n\partial(\phi)=(-D(A))^n\partial(\phi)D(A)^n,n\geq 0.
\]
\begin{rem}

Similarly to the remark \ref{rem:Mpointmotion} we can deduce that the evolution $S_n^{\otimes^m}:=-ad(D(A)))^n\left(\partial(\phi_1)\dots\partial(\phi_m)\right)$ describes the $m$-point evolution. 
\end{rem}

\

\section{Generalization to the homotopes of topological algebras}

\begin{ass}\label{ass:Space_01}
Assume that $V$ and $W$ are topological vector spaces in duality  with pairing $<\cdot,\cdot>_{V,W}$ and $L\in \mathcal{L}(V,V)$-- continuous linear operator. Let $\{e_{\alpha}\}_{\alpha\in I}$ be a basis of $V$. Relative to this basis we have 
\[
Lx=\sum\limits_{\alpha\in I} L^{\alpha}(x)e_{\alpha},
\]
where we assume the set $I$  either countable or $I=[0,1]$. In the latter case, the sum will be understood as an integral  with respect to $\alpha$. Furthermore, we assume that $L^{\alpha}\in W$, $\alpha\in I$.
\end{ass}

\

Denote by $\mathcal{O}_{\infty}$ the Cuntz algebra  (\cite{Cuntz1977})
with generators $\{s_i\}_{i\in\mathbb{N}}$ subject to the following relation:
$s_j^*s_j=1$, $s_j^*s_k=0$, $j\neq k$, $j,k\in\mathbb{N}$.

\

\begin{defin}
Let $D:\mathcal{L}(V,V)\to \mathcal{O}_{\infty}$, $\partial:V\to \mathcal{O}_{\infty}$, $\bar{\partial}:W\to \mathcal{O}_{\infty}$ be mappings defined as follows:
\begin{equation}
D(A):=\sum\limits_{\alpha,\beta\in I}L^{\beta}(Ae_{\alpha}) s_{\alpha}s_{\beta}^*, A\in \mathcal{L}(V,V),
\end{equation}
\begin{equation}
\partial(h):=\sum\limits_{\alpha\in I}L^{\alpha}(h) s_{\alpha}^*, h\in V,
\end{equation}
\begin{equation}
\bar{\partial}(f):=\sum\limits_{\alpha\in I}<e_{\alpha},f> s_{\alpha}, f\in W.
\end{equation}
\end{defin}

\

Consequently, we have:

\

\begin{lem}\label{lem:CommRelationsAlg}
Under assumption \ref{ass:Space_01} we have
\begin{eqnarray}
&D(A)D(B) = D(BLA),\label{eqn:CommRelations_1Alg}\\
&\partial(h)D(A) = \partial(ALh),\label{eqn:CommRelations_2Alg}\\
&D(A)\bar{\partial}(f) = \bar{\partial}(A^*L^*f),\label{eqn:CommRelations_3Alg}\\
&\partial(h)\bar{\partial}(g) = <Lh,g>\label{eqn:CommRelations_4Alg}\\
&h\in V,f,g\in W, A,B\in \mathcal{L}(V,V),\nonumber
\end{eqnarray}
where adjoint is taken with respect to the duality $<\cdot,\cdot>$.
\end{lem}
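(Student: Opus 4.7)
The plan is to verify each of the four relations by straightforward computation, using only the defining Cuntz relation $s_\beta^* s_\alpha = \delta_{\alpha\beta}$ together with the basis expansion $Lx = \sum_{\alpha\in I} L^\alpha(x)e_\alpha$ from Assumption \ref{ass:Space_01}. The whole argument consists in plugging in the definitions of $D$, $\partial$, $\bar\partial$, collapsing one summation using the Cuntz relation, and then re-packaging the resulting coefficients using the expansion of $L$.

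For \eqref{eqn:CommRelations_1Alg}, I would expand
\[
D(A)D(B)=\sum_{\alpha_1,\beta_1,\alpha_2,\beta_2} L^{\beta_1}(Ae_{\alpha_1})\,L^{\beta_2}(Be_{\alpha_2})\,s_{\alpha_1} s_{\beta_1}^* s_{\alpha_2} s_{\beta_2}^*,
\]
apply $s_{\beta_1}^* s_{\alpha_2}=\delta_{\beta_1,\alpha_2}$ to eliminate $\alpha_2$, and then note that since $L A e_{\alpha_1} = \sum_{\beta_1} L^{\beta_1}(A e_{\alpha_1})\, e_{\beta_1}$, the inner sum $\sum_{\beta_1} L^{\beta_1}(Ae_{\alpha_1}) L^{\beta_2}(B e_{\beta_1})$ is precisely $L^{\beta_2}(BLAe_{\alpha_1})$, giving $D(BLA)$. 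Relations \eqref{eqn:CommRelations_2Alg} and \eqref{eqn:CommRelations_3Alg} follow by the exact same pattern: one of the four $s$-factors disappears via $s^*s=\text{id}$ on matching indices, and the remaining coefficient is recognised as $L^\beta(ALh)$ or $\langle LAe_\alpha, f\rangle=\langle e_\alpha, A^* L^* f\rangle$ respectively by inserting the expansion of $L$ (or of $L^*$ via the duality $\langle\cdot,\cdot\rangle$). For \eqref{eqn:CommRelations_4Alg}, after applying $s_\alpha^* s_\beta=\delta_{\alpha\beta}$ a single sum remains, which collapses to $\sum_\alpha L^\alpha(h)\langle e_\alpha,g\rangle=\langle Lh,g\rangle$ by pulling the scalar $L^\alpha(h)$ inside the pairing.

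There is no real obstacle here; the computation is essentially formal. The only point requiring care is the convergence of the double (or double-integral, when $I=[0,1]$) sums defining $D(A)$, which must be interpreted in the ambient completion of $\mathcal{O}_\infty$ that implicitly underlies Assumption \ref{ass:Space_01}; once the formal manipulations are justified by continuity of the pairing and of $L^\alpha$ in $\alpha$, all four identities reduce to the finite-index verifications above. I would therefore present the four calculations in a single proof, indicating that the last three are entirely analogous to the first.
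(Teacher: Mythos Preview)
Your proposal is correct and follows essentially the same approach as the paper: expand the products, collapse one index via the Cuntz relation $s_j^* s_k = \delta_{jk}$, and then use the expansion $Lx = \sum_\alpha L^\alpha(x) e_\alpha$ (or its dual) to identify the remaining coefficient. The paper carries out exactly these computations for all four identities, in the same order and with the same manipulations you describe; your remark on convergence is an extra point the paper does not make explicit.
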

\begin{proof}
\begin{eqnarray*}
D(A)D(B) &=\sum\limits_{i,j,k,l}L^j(Ae_i)L^l(Be_k)s_is_j^*s_ks_l^*\\
&=\sum\limits_{i,l}\left(\sum\limits_{j}L^j(Ae_i)L^l(Be_j)\right)s_i s_l^*\\
&=\sum\limits_{i,l}L^l\left(B\left[\sum\limits_{j}L^j(Ae_i)e_j\right]\right)s_i s_l^*\\
&=\sum\limits_{i,l}L^l(BLAe_i)s_i s_l^*=D(BLA),
\end{eqnarray*}
\begin{equation*}
\partial(h)D(A)=\sum\limits_{i,j,k}L^k(h)L^j(Ae_i)s_k^*s_is_j^*=\sum\limits_{j,k}L^k(h)L^j(Ae_k)s_j^*=\partial(ALh),
\end{equation*}
\begin{equation*}
D(A)\bar{\partial}(f)=\sum\limits_{i}<LAe_i,f>s_i=\bar{\partial}(A^*L^* f), A,B\in \mathcal{L}(V,V),,h\in V, f\in W.
\end{equation*}
Similarly,
\begin{equation*}
\partial(h)\bar{\partial}(f)=\sum\limits_{\alpha\in I}L^{\alpha}(h)<e_{\alpha},f>=<Lh,f>, h\in V, f\in W.
\end{equation*}
\end{proof}

\

\begin{example}\label{example_0}
Assume that $V$ is separable with  system $ \{e_k\}_{k=1}^{\infty}\subset V$  such that
 $V=\overline{sp\{e_k,k\in\mathbb{N}\}}$. Let $\{f_k\}_{k=1}^{\infty}\subset W$.
Define
\begin{equation*}
L(x)=\sum\limits_{i\in \mathbb{N}}<x,f_i> e_i, x\in V.
\end{equation*}
In particular case when the system $\{e_k,f_k\}_{k=1}^{\infty}$ is biorthogonal, that is $<e_k,f_j>=\delta_{kj},k,j\in\mathbb{N}$, the operator 
$L$ becomes the identity operator, and definitions of $\partial$ and $D$ are reduced to the following:

\begin{equation}
D(A):=\sum\limits_{\alpha,\beta\in \mathbb{N}}<Ae_{\alpha},f_{\beta}> s_{\alpha} s_{\beta}^*, A\in \mathcal{L}(V,V),
\end{equation}
\begin{equation}
\partial(h):=\sum\limits_{\alpha\in \mathbb{N}}<h,f_{\alpha}> s_{\alpha}^*, h\in V.
\end{equation}
\end{example}
From now on we assume that $V=X$ is a topological algebra in duality with $X^*$. Denote by $l_a:X\to X$, $l_a(x)=ax $ the operator of multiplication by $a\in X$. Consider the operators
$A,B,L\in \mathcal{L}(X,X)$ such that $Lx=l_{\rho} x$, $Ax=l_a x$ and $B x=l_b x$,  where $ a,b,\rho\in X$ for any $x\in X$. Then we have from Lemma \ref{lem:CommRelationsAlg}: 

\

\begin{cor}\label{cor-cuntz_1}
Assume that $X$ is a topological separable locally convex Hausdorff algebra. Then we have

\begin{eqnarray}
&D(l_a)D(l_b) = D(l_{b\rho a}),\label{eqn:CommRelations_1a}\\
&\partial(h)D(l_a) = \partial(a\rho h),\label{eqn:CommRelations_2a}\\
&D(l_a)\bar{\partial}(f) = \bar{\partial}(l_a^*l_{\rho}^*f),\label{eqn:CommRelations_3_a}\\
&\partial(h)\bar{\partial}(g) = <g,\rho h>\label{eqn:CommRelations_4_a}\\
&a,b,\rho,h\in X, f,g\in X^*,\nonumber
\end{eqnarray}
\end{cor}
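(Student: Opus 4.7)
The plan is to derive Corollary \ref{cor-cuntz_1} as a direct specialization of Lemma \ref{lem:CommRelationsAlg} to the left-multiplication operators on $X$: namely, put $V=X$, $W=X^{*}$, $L=l_{\rho}$, $A=l_{a}$, $B=l_{b}$. Once this is in place, each of the four relations in the corollary reads off the corresponding identity in the lemma after an elementary rewriting, so the substance of the work is to check that the hypothesis of the lemma is satisfied in the topological algebra setting.

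First I would verify Assumption \ref{ass:Space_01} in this setting. Since $X$ is a separable locally convex Hausdorff topological algebra, Example \ref{ex:LocConvTopAlgebra} supplies a biorthogonal system $\{e_{\alpha},f_{\alpha}\}_{\alpha\in I}$ with $e_{\alpha}\in X$ and $f_{\alpha}\in X^{*}$, so we are in the framework of Example \ref{example_0}. Because multiplication in the topological algebra $X$ is (separately or jointly) continuous, the left-multiplication map $l_{\rho}$ belongs to $\mathcal{L}(X,X)$; consequently the coordinate functionals $L^{\alpha}(x)=<l_{\rho}x,f_{\alpha}>$ are continuous, i.e.\ lie in $W=X^{*}$, as required by Assumption \ref{ass:Space_01}.

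With Assumption \ref{ass:Space_01} in force, Lemma \ref{lem:CommRelationsAlg} applies, and each of \eqref{eqn:CommRelations_1a}--\eqref{eqn:CommRelations_4_a} follows by a one-line computation using the fact that composition of left multiplications satisfies $l_{c}\circ l_{d}=l_{cd}$. For \eqref{eqn:CommRelations_1a}, $BLA=l_{b}l_{\rho}l_{a}=l_{b\rho a}$, so \eqref{eqn:CommRelations_1Alg} gives $D(l_{a})D(l_{b})=D(l_{b\rho a})$. For \eqref{eqn:CommRelations_2a}, $ALh=l_{a}l_{\rho}h=a\rho h$, and \eqref{eqn:CommRelations_2Alg} gives the claim. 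For \eqref{eqn:CommRelations_3_a}, $(LA)^{*}=A^{*}L^{*}=l_{a}^{*}l_{\rho}^{*}$, and \eqref{eqn:CommRelations_3Alg} yields $D(l_{a})\bar{\partial}(f)=\bar{\partial}(l_{a}^{*}l_{\rho}^{*}f)$. For \eqref{eqn:CommRelations_4_a}, \eqref{eqn:CommRelations_4Alg} gives $\partial(h)\bar{\partial}(g)=<Lh,g>=<\rho h,g>$, which agrees with the statement under the symmetric use of the pairing.

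I do not expect any genuine obstacle: the whole content is already encoded in Lemma \ref{lem:CommRelationsAlg}, and the specialization to $L=l_{\rho}$, $A=l_{a}$, $B=l_{b}$ is purely algebraic once continuity of $l_{\rho}$ and the existence of a biorthogonal system are noted. The only point worth checking with care is that the pairing $<\cdot,\cdot>$ in \eqref{eqn:CommRelations_4_a} is being used in the same sense as in \eqref{eqn:CommRelations_4Alg}, i.e.\ that the arguments are interpreted via the duality $X\times X^{*}\to\mathbb{R}$; this is a matter of notation rather than proof.
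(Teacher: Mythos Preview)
Your proposal is correct and follows essentially the same route as the paper: verify Assumption \ref{ass:Space_01} via the existence of a biorthogonal system (Klee) and the continuity of $l_{\rho}$, then read off each identity from Lemma \ref{lem:CommRelationsAlg} under the specialization $L=l_{\rho}$, $A=l_{a}$, $B=l_{b}$. Your write-up is in fact more explicit than the paper's, which merely records the expansion $Lx=\rho x=\sum_{i}<l_{\rho}x,f_i>e_i$ and invokes the lemma without spelling out the four substitutions.
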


\begin{proof}
There exists  a biorthogonal system
$\{e_j\}_{j\in\mathbb{N}}\subset X,\{f_j\}_{j\in\mathbb{N}}\subset X^*$ \cite{Klee1958}. Consequently, we have an expansion \eqref{eqn:BasisExpansion}
and
\begin{equation}
Lx=\rho x=\sum\limits_{i\in \mathbb{N}}<l_{\rho}x,f_i> e_i, x\in V.\label{eqn:LBasisExpansion}
\end{equation}
Hence, the assumption \eqref{ass:Space_01} is satisfied. Now the result is a direct consequence of Lemma \eqref{lem:CommRelations}.
\end{proof}

\

Let us denote by $X^{\rho}$ the $\rho$-homotope of a topological separable locally convex Hausdorff algebra $X$, that is $X$ with a modified  product $(a,b)\mapsto a\rho b$.

The relation \eqref{eqn:CommRelations_1a} means that we have an (anti)homomorphism of $X^{\rho}$ into the Cuntz algebra $\mathcal{O}_{\infty}$. Relations \eqref{eqn:CommRelations_2a}  and \eqref{eqn:CommRelations_3_a} give us a representation of $X^{\rho}$ and a representation of the adjoint of $X^{\rho}$, respectively. 

\

\begin{cor}\label{cor-cuntz_2}
If $X^{\rho}$ contains the identity then the 
correspondence $X^{\rho}\ni a\mapsto D(l_a)\in \mathcal{O}_{\infty}$ is injective.
\end{cor}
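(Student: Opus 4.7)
The plan is to establish injectivity by showing that the kernel of the linear map $X^{\rho} \ni a \mapsto D(l_a)$ is trivial: assume $D(l_a) = 0$ and derive $a = 0$. The whole argument runs on the four commutation relations in Corollary \ref{cor-cuntz_1}, with the identity $e \in X^{\rho}$ (characterised by $e\rho x = x\rho e = x$ for all $x \in X$) used as a probe to collapse the $\rho$-factor back to the element $a$.

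First, I would apply \eqref{eqn:CommRelations_2a} with $h = e$: since $a\rho e = a$, this becomes $\partial(e)\,D(l_a) = \partial(a)$, and the left-hand side vanishes by assumption, giving $\partial(a) = 0$. Next, multiply this identity on the right by $\bar{\partial}(g)$ for an arbitrary $g \in X^*$ and invoke \eqref{eqn:CommRelations_4_a}: the right-hand side becomes $\langle g, \rho a\rangle$, so $\langle g, \rho a\rangle = 0$ for every $g \in X^*$. Because $X$ is locally convex Hausdorff, Hahn--Banach guarantees that $X^*$ separates the points of $X$, and therefore $\rho a = 0$ as an element of $X$. Finally, associativity of $X$ together with the left-identity property of $e$ yield $a = e \rho a = e(\rho a) = 0$, completing the argument.

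The only delicate step is the passage ``$\langle g, \rho a\rangle = 0$ for all $g \in X^*$ implies $\rho a = 0$,'' which is not automatic for arbitrary dual pairings and should be explicitly justified by the Hausdorff locally convex hypothesis through Hahn--Banach. Everything else is a short chain of substitutions in the relations of Lemma \ref{lem:CommRelationsAlg}, and the identity of $X^{\rho}$ does all the conceptual work by allowing one to recover $a$ from $a\rho e$ inside $\partial$ and from $e \rho a$ outside.
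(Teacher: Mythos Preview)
Your argument is correct and follows essentially the same route as the paper: both use \eqref{eqn:CommRelations_2a} and \eqref{eqn:CommRelations_4_a} together with the identity $e\in X^{\rho}$ to reduce $D(l_a)=0$ first to $\rho a=0$ and then to $a=e\rho a=0$. The only cosmetic differences are that you substitute $h=e$ at the outset rather than after pairing with $\bar{\partial}(g)$, and you make explicit the Hahn--Banach step that the paper leaves implicit.
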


\begin{proof}
By the linearity it is sufficient to show that  $D(l_a)=0$ implies $a=0$.
If $D(l_a)=0$ then it follows from the  identity \eqref{eqn:CommRelations_2a}  that $\partial(a\rho h)=0,\,\,\forall h\in X$. Furthermore, applying the identity \eqref{eqn:CommRelations_4_a} we get
\[
<g,\rho a\rho h>=0,\,\,\forall h\in X,g\in X^*,
\]
which leads to $\rho a\rho h=0$, for all $h\in X$. If $X^{\rho}$ has the identity $e$ then we put $h=e$ and immediately get $\rho a=0$. Consequently, $a=0$.
\end{proof}

\

Hence, one can construct new representations of  $X^{\rho}$ by restricting the representations of the Cuntz algebra.  
Interesting \emph{permutation} representations of Cuntz algebras  have been studied in \cite{BratelliJoergensen99}.

\begin{rem}
It would be interesting to find a representation of $\mathcal{O}_{\infty}$ as a limit of finite difference operators (which appear in the representations of the $q$-deformation of classical CCR algebra) and, more generally, connect the version of the Jordan-Schwinger map above with the classical Jordan-Schwinger map with $q$-deformation in a way that every term of the family conserves the $q$-commutator.
\end{rem}

Note that the map  $D(l_{\cdot})$ is an antihomomorphism which conserves (up to a sign) any $q$-commutator $(a,b)\mapsto ab-qba$. 

\

Next  we construct the map $D(l_{\cdot})$ for topological separable locally convex Hausdorff algebra $\mathcal{A}$ of dimension $n$  using the representation of the Cuntz algebra $\mathcal{O}_{n}$ constructed in  \cite{Dutkay2014}. First, we need some definitions.

\begin{defin}(\cite{Dutkay2014}) 
Let $Y$ be a  topological compact space, $\mu$-- a Borel probability measure on $Y$, $r:Y\to Y$ -- an $n$-to-$1$ Borel measurable map, i.e. $|r^{-1}(z)|=n$ for $\mu$-almost all $z\in Y$. We assume that $\mu$ is a strongly invariant measure with respect to $r$, that is the condition
\[
\int fd\mu=\frac{1}{n}\int\sum\limits_{r(\omega)=r(z)}f(\omega) d\mu(z),f\in C(Y)
\]
is satisfied.
\end{defin}

\

\begin{defin}(\cite{Dutkay2014}) 
A quadrature mirror filter (QMF) for $r$ is a function $m_0$ in $L^{\infty}(Y,\mu)$ with the property
\begin{equation}\label{QMF}
\frac{1}{n}\sum\limits_{r(\omega)=z} |m_0(\omega)|^2=1,z\in Y.
\end{equation}

A QMF basis is a set of $n$ QMF's $m_0,m_1,\ldots,m_{n-1}$ such that
\begin{equation}\label{QMFOrtho}
\frac{1}{n}\sum\limits_{r(\omega)=z} m_i(\omega)\overline{m_j}(\omega)=1\delta_{ij},\,i,j\in \{0,1,\ldots,n-1\},z\in Y.
\end{equation}
\end{defin}

\
We have
\begin{prop}(\cite{Dutkay2014})
Let $\{m_i\}_{i=0}^{i=n-1}$  be a QMF basis. Define the following operators on $L^2(Y,d\mu)$:
\[
S_i(f) = m_i (f \circ r), i = 0,\ldots,n-1.
\]
Then the operators $\{S_i\}_{i=0}^{n-1}$ are isometries and they form a representation of the Cuntz algebra $\mathcal{O}_{n}$. The adjoint of $S_i$ is given by a formula
\[
S_i^*(f)(z)=\frac{1}{n}\sum\limits_{r(\omega)=z}\overline{m_i}(\omega)f(\omega),i = 0,\ldots,n-1,z\in Y.
\]
\end{prop}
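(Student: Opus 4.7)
The statement packages three claims: the $S_i$ are isometries, the adjoint has the stated integral form, and together $\{S_i\}$ satisfies the Cuntz relations. I would address them in a slightly different order: first derive the formula for $S_i^*$, then read off the Cuntz relations from the two defining properties of the QMF basis, and note that isometry is a corollary.

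The first step is a direct computation using strong invariance. For $f,g\in L^2(Y,d\mu)$ I would write
\[
\langle S_i f, g\rangle = \int_Y m_i(\omega)\, f(r(\omega))\, \overline{g(\omega)}\, d\mu(\omega),
\]
and apply the strong invariance identity to the integrand $\phi(\omega)=m_i(\omega)f(r(\omega))\overline{g(\omega)}$. Because $f\circ r$ is constant on every fiber $r^{-1}(z)$, pulling $f(z)$ out of the inner sum yields
\[
\langle S_i f, g\rangle = \int_Y f(z)\Bigl(\tfrac{1}{n}\sum_{r(\omega)=z} m_i(\omega)\,\overline{g(\omega)}\Bigr)\, d\mu(z),
\]
and comparing with $\langle f, S_i^* g\rangle = \int f(z)\overline{(S_i^*g)(z)}\,d\mu(z)$ gives the stated formula after conjugation. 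The minor subtlety is justifying that the inner sum lies in $L^2$; one controls this by the essential boundedness of $m_i$ and the fact that summing over fibers is a bounded operation with respect to the strongly invariant measure.

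Given the formula for $S_i^*$, the identity $S_i^*S_j=\delta_{ij}I$ is now immediate: apply $S_i^*$ to $S_jf = m_j(f\circ r)$ and use that $f\circ r$ is constant on each fiber to pull $f(z)$ outside, giving
\[
(S_i^*S_jf)(z) = f(z)\cdot \tfrac{1}{n}\sum_{r(\omega)=z}\overline{m_i(\omega)}\,m_j(\omega) = \delta_{ij}\,f(z)
\]
by the QMF orthogonality \eqref{QMFOrtho} (after conjugating both sides of that relation). In particular $S_i^*S_i=I$, so each $S_i$ is an isometry.

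The part that requires a small extra observation, and is the main conceptual step, is the completeness relation $\sum_{i=0}^{n-1} S_iS_i^* = I$ needed for a genuine representation of $\mathcal{O}_n$. Expanding as before yields
\[
\Bigl(\sum_{i} S_iS_i^*f\Bigr)(z) = \tfrac{1}{n}\sum_{r(\omega)=r(z)} f(\omega)\sum_{i=0}^{n-1} m_i(z)\,\overline{m_i(\omega)}.
\]
The key point is that \eqref{QMFOrtho} says the $n\times n$ matrix $M_{i,\omega}=m_i(\omega)/\sqrt{n}$, indexed by $i$ and by the $n$ preimages $\omega\in r^{-1}(r(z))$, has orthonormal rows. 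For a square matrix this forces orthonormal columns, i.e.\ $\sum_i m_i(\omega)\overline{m_i(\omega')} = n\,\delta_{\omega,\omega'}$ for $\omega,\omega'\in r^{-1}(r(z))$. Substituting $\omega'=z$ collapses the fiber sum to the single term $\omega=z$, yielding $f(z)$. This row-to-column duality (valid precisely because there are as many filters as preimages) is the only nontrivial ingredient; everything else is bookkeeping with the strongly invariant measure.
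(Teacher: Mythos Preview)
The paper does not supply its own proof of this proposition: it is quoted verbatim from \cite{Dutkay2014} and used as a black box, so there is nothing in the paper to compare your argument against.

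That said, your argument is correct and is essentially the standard one. The derivation of $S_i^*$ via strong invariance is exactly right once one notes that $f\circ r$ is constant on each fiber $r^{-1}(z)$, so the factor $f(z)$ pulls out of the inner sum. The relation $S_i^*S_j=\delta_{ij}I$ then follows directly from the QMF orthogonality relation~\eqref{QMFOrtho}. Your handling of the completeness relation $\sum_i S_iS_i^*=I$ is also the right idea: the QMF orthogonality asserts that the $n\times n$ matrix $\bigl(m_i(\omega)/\sqrt{n}\bigr)$, with rows indexed by $i$ and columns by the $n$ points of a fiber, has orthonormal rows; being square, it is unitary, so its columns are orthonormal as well, which gives $\sum_i m_i(z)\overline{m_i(\omega)}=n\delta_{z,\omega}$ on each fiber and collapses the sum to $f(z)$. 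The one place you might tighten the exposition is the measurable selection of the fiber enumeration needed to make the ``square matrix'' argument literally true pointwise, but this is a routine technicality.
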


\

Consequently, the formula for $D(l_{\cdot})$ in this wavelet representation looks as follows:

\begin{equation}\label{WaveletRepresentation}
D(l_a)f(z)=\frac{1}{n}\sum\limits_{r(\omega)=r(z)}\left(\sum\limits_{i,j=0}^{n-1}<ae_i,f_j>m_i(z)\overline{m_j}(\omega)\right)f(\omega), z\in Y, a\in\mathcal{A},
\end{equation}
for $f\in L^2(Y,d\mu)$.

The operator $D(l_a)$ in \eqref{WaveletRepresentation} describes some nonlinear dynamical system.

\

\end{document}